\documentclass[smallextended]{svjour3}       % onecolumn (second format)
\smartqed  % flush right qed marks, e.g. at end of proof
\usepackage{graphicx}
\usepackage{tikz-cd}

\usepackage{amsfonts}
\usepackage{amsmath}
\usepackage{amssymb}

\usepackage{amsthm}
% For citations
\usepackage[square,numbers]{natbib}

\usepackage{hyperref}

\theoremstyle{definition}
%\newtheorem{definition}{Definition}
%\newtheorem*{remark}{Remark}
%\newtheorem*{example}{Example}

%
% \usepackage{mathptmx}      % use Times fonts if available on your TeX system
%
% insert here the call for the packages your document requires
%\usepackage{latexsym}
% etc.
%
% please place your own definitions here and don't use \def but
% \newcommand{}{}
%
% Insert the name of "your journal" with
% \journalname{myjournal}
%
\begin{document}

\title{An algorithm for determining the irreducible polynomials over finite fields}

%\subtitle{Do you have a subtitle?\\ If so, write it here}

%\titlerunning{Short form of title}        % if too long for running head

\author{Samuel H. Dalalyan}

%\authorrunning{Short form of author list} % if too long for running head

\institute{Yerevan State University\at
              \email{dalalyan@ysu.am}           %  \\
%             \emph{Present address:} of F. Author  %  if needed
}

\date{Received: \today / Accepted: date}
% The correct dates will be entered by the editor

\maketitle

\begin{abstract}
We propose an algorithm for determining the irreducible polynomials over finite fields, 
based on the use of the companion matrix of polynomials
and the generalized Jordan normal form of square matrices.
\keywords{algorithm\and irreducible polynomial\and finite field \and companion matrix \and generalized Jordan normal form}

% \PACS{PACS code1 \and PACS code2 \and more}
 \subclass{MSC 12Y05 \and MSC 12-04}
\end{abstract}

%\parindent=25pt
%\parskip=10pt

%\sloppy

%%%%%%%%%%%%%%%%%%%%%%%%%%%%%%%%%%%%%%%%%%%%%%%%%%%%%%%%%%%%%%%%%%%%%%%%%%%%%%%%
%%%%%%%%%%%%%%%%%%%%%%%%%%%%%%%%%%%%%%%%%%%%%%%%%%%%%%%%%%%%%%%%%%%%%%%%%%%%%%%%%

\section{Introduction and the main results}

The problem of finding the irreducible polynomials over  finite fields 
together with the related topic of the irreducible factorization of polynomials 
is one of central themes in the theory of finite fields and in the computational algebra, 
and has numerous applications in coding theory, cryptography, computational number theory. 
There are fairly complete surveys of the work in these areas:  
a comprehensive account of earlier results can be found in the monograph  \citep{LN}, 
while  \citep{GP} provides a survey of relatively recent work.
The best known algorithm for polynomials irreducible factorization is Berlekamp's algorithm \citep{Ber}.
Typically, algorithms for finding irreducible polynomials over a finite field are derived from algorithms 
for irreducible factorization of polynomials 
provided that the irreducible decomposition has a unique factor. 
A well-known algorithm for determining irreducible polynomials over finite fields is 
Rabin's test \citep{Rab}. 
There are other similar algorithms (see, for example, \cite{GaoP}, \cite{Shoup}).

In this paper, we propose an algorithm for finding the irreducible polynomials over finite fields, 
which, in contrast to the aforementioned work,
 is based on the use of the companion matrix of a polynomial, 
the notion of the multiplicative order of a matrix and 
the generalized Jordan normal form of a linear operator (matrix).

The companion matrix of a polynomial 
$$
f(t) = a_0 + a_1 t + a_2 t^2 + ... + a_{d-2} t^{d-2} + a_{d-1} t^{d-1} + t^d,
$$
over a field $\bf F$ is defined as the matrix
$$
[f] =
\left(\begin{array}{cccccc}
0 & 1 & 0 & ... & 0 & 0 \\
0 & 0 & 1 & ... & 0 & 0  \\
0 & 0 & 0 & ... & 0 & 0  \\
...& ... & ... & ... &... & ...  \\
0 & 0  & 0 & ... & 0 & 1 \\
-a_{0} & -a_{1} & -a_{2} & ...  & -a_{d-2} & -a_{d-1}  \\
\end{array} \right).
$$
It is not difficult to check that the characteristic polynomial of the companion matrix 
$[f]$ is equal to $f(t)$. 

The multiplicative order (abbreviated to m.o.) of a square matrix $A$ over a field $\bf F$
is the minimal positive integer $l$ such that $A^l = E$, where $E$ is a unit matrix. 
For the existence of $m.o. A$ it is necessary that $det A \neq 0$. 
This condition is also a sufficient condition, when $\bf F$ is a finite field ${\bf F}_q$, 
where $q = p^n$, $p$ is a prime, $n$ is a positive integer. 
In this paper only finite fields ${\bf F} = {\bf F}_q$ are considered.

The m.o.$[f]$ of the companion matrix $[f]$ of a polynomial $f(t)$ over ${\bf F}_q$
with a non-zero free term $a_0$ is called  {\it the  order} of $f(t)$ 
and is denoted by $ord \, f(t)$ ([1], Definition 3.2 and Lemma 8.26).

Let $m$ be a positive integer, non-multiple of $p$. 
In this (and only in this) case there exist positive integers $c$ such that 
$q^c - 1$ is divided by $m$. 
Any such $c$ is divided by the minimal such integer $e$. 
This $e$ is called the multiplicative order of $q (mod. m)$.

The main results of this paper are the following theorem and the algorithm, 
based on this theorem.
 
\begin{theorem}
\label{th}  
Let ${\bf F} = {\bf F}_q$ be a finite field, 
where $q = p^n$, $p$ is a prime, $n$ is a positive integer. 
Let $f(t)$ be a unitary polynomial over ${\bf F}$ with a non-zero free term, 
$[f]$ be its companion matrix and
\begin{equation}
\label{paramf}
d = deg \, f(t), \quad  m = ord \, f(t) = m.o. [f]. 
\end{equation}
Then the following assertions are true:

1)  $m \leq q^d - 1$ and  $m = q^d - 1$ if and only if $f(t)$ is a primitive irreducible polynomial;

2) more generally, $f(t)$ is an irreducible  polynomial if and only if  
$p$ is not a divisor of $m$,  $d = m.o. q (mod. m)$ and $rk ([f]^l - E) = d$ for all 
positive integers $l <m$, dividing $m$.
\end{theorem}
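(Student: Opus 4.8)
The plan is to translate every statement about the matrix $[f]$ into a statement about the residue ring $R = {\bf F}_q[t]/(f(t))$. First I would record the structural fact underlying everything: since $[f]$ is a companion matrix, its minimal polynomial equals its characteristic polynomial $f(t)$. Consequently, for any polynomial $g$, the matrix $g([f])$ vanishes iff $f \mid g$ and is invertible iff $\gcd(g,f)=1$. Taking $g = t^l-1$ yields the dictionary $[f]^l = E \iff f \mid t^l-1$ and $\mathrm{rk}([f]^l - E) = d \iff \gcd(t^l-1,f)=1$. In particular $m = \mathrm{m.o.}[f]$ is the least $l$ with $f \mid t^l-1$, which is exactly the order of the residue class $\bar t$ in the unit group $R^*$; here $\bar t$ is a unit precisely because $a_0 = f(0) \neq 0$, so $\gcd(t,f)=1$.

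For assertion 1) I would argue inside $R$, a commutative ring with $q^d$ elements. Since $0 \notin R^*$ we have $|R^*| \le q^d - 1$, so by Lagrange's theorem $m = \mathrm{ord}_{R^*}(\bar t)$ divides $|R^*|$ and thus $m \le q^d - 1$. For the equality case, $m = q^d - 1$ forces $|R^*| = q^d - 1$, so $0$ is the only non-unit of $R$; a finite commutative ring all of whose nonzero elements are invertible is a field, hence $f$ is irreducible, $R = {\bf F}_{q^d}$, and $\bar t$ has order $q^d-1 = |R^*|$, i.e. $\bar t$ generates the cyclic group $R^*$, which is precisely primitivity of $f$. The converse is immediate from the same identification.

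For assertion 2), the forward implication is the easy half. If $f$ is irreducible then $R = {\bf F}_{q^d}$ and $\alpha := \bar t$ is a root of $f$ with $\mathrm{ord}(\alpha) = m \mid q^d - 1$, so $p \nmid m$. Since $\alpha$ lies in ${\bf F}_{q^k}$ iff $\alpha^{q^k-1}=1$ iff $m \mid q^k-1$, the least such $k$ is simultaneously the degree $d$ of $\alpha$ over ${\bf F}_q$ and the multiplicative order of $q\,(\mathrm{mod.}\,m)$, giving $d = \mathrm{m.o.}\,q\,(\mathrm{mod.}\,m)$. Finally, for a divisor $l$ of $m$ with $l<m$ we have $\alpha^l \neq 1$, hence $f \nmid t^l-1$, and since $f$ is irreducible $\gcd(t^l-1,f)=1$, i.e. $\mathrm{rk}([f]^l-E)=d$.

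The reverse implication is where the real work lies, and I expect it to be the main obstacle. Assuming the three conditions, I would first use $p \nmid m$ to conclude that $t^m-1$ is squarefree (its derivative $m\,t^{m-1}$ is coprime to it); combined with $f \mid t^m-1$ this forces $f$ to be squarefree, say $f = p_1 \cdots p_r$ with distinct irreducible factors $p_i$ of degree $d_i$ and $\sum_i d_i = d$. The Chinese Remainder Theorem then gives $R \cong \prod_i {\bf F}_{q^{d_i}}$, under which $\bar t \mapsto (\alpha_1,\dots,\alpha_r)$ with $\alpha_i$ a root of $p_i$; writing $m_i = \mathrm{ord}(\alpha_i)$ we get $m = \mathrm{lcm}(m_1,\dots,m_r)$, and the forward argument applied to each factor yields $d_i = \mathrm{m.o.}\,q\,(\mathrm{mod.}\,m_i)$. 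The decisive step is the rank hypothesis: condition $\mathrm{rk}([f]^l-E)=d$ reads $\gcd(t^l-1,f)=1$, i.e. $m_i \nmid l$ for every $i$, for all proper divisors $l \mid m$. If some $m_i$ were strictly less than $m$, then $l = m_i$ would itself be such a proper divisor with $m_i \mid l$, a contradiction; hence $m_i = m$ for all $i$. Then $d_i = \mathrm{m.o.}\,q\,(\mathrm{mod.}\,m) = d$ by the second condition for every $i$, and $\sum_i d_i = d$ forces $r = 1$, so $f = p_1$ is irreducible. The subtle points to get right are the equivalence of the rank condition with coprimality (which relies on $[f]$ being non-derogatory) and the squarefreeness reduction, without which the clean CRT-and-lcm bookkeeping would break down.
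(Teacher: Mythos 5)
Your proof is correct, but it takes a genuinely different route from the paper. The paper deduces Theorem 1 from the more general Theorem 2 (about an arbitrary non-singular matrix $A$ with characteristic polynomial $f$), and its machinery is tailored to that generality: the generalized Jordan normal form, two theorems computing the multiplicative order of a Jordan block as $p^r m'$, the factorization of $t^{q^d}-t$, and a case-by-case verification of inequalities of the form $p^r m' < q^d-1$ to handle assertion 1 when $f$ is a power of an irreducible. You instead exploit the fact that the companion matrix is non-derogatory, so everything transports to the quotient ring $R = {\bf F}_q[t]/(f(t))$: assertion 1 collapses to Lagrange's theorem in $R^*$ together with the observation that a finite commutative ring whose nonzero elements are all units is a field, and the converse direction of assertion 2 is handled by the squarefreeness of $t^m-1$ when $p \nmid m$ plus the Chinese Remainder Theorem, with the rank condition forcing all component orders $m_i$ to equal $m$ and condition (b) then forcing a single factor. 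This buys a substantially shorter and more elementary argument for the stated theorem, entirely avoiding the Jordan form and the inequality case analysis; what it gives up is scope, since the reduction to $R$ hinges on the minimal polynomial of $[f]$ equalling $f$, and so your argument does not directly establish the paper's Theorem 2 for matrices whose minimal polynomial is a proper divisor of the characteristic polynomial. In particular, the paper's route also yields its Theorems 4 and 5 on multiplicative orders (used later for Remark 2), which your approach does not produce as by-products.
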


{\bf Algorithm for finding the irreducible polynomials over a finite field} 

Suppose that $f(t)$ is a unitary polynomial over a finite field ${\bf F} = {\bf F}_q$  of a degree $d$ 
and with a nonzero free term.
\begin{enumerate}
\item Construct the companion matrix $[f]$ of $f(t)$. 

\item  Compute $[f]^l, \, l = 2, 3, ...$ and find $m = m.o. [f]$. 
Then $m \leq q^d - 1$ and $m = q^d - 1$ if and only if
  $f(t)$ is a primitive irreducible polynomial.

\item If $m < q^d - 1$ and $m$ is a multiple of $p$,
then the polynomial $f(t)$ is  reducible.

\item Suppose that $m < q^d - 1$ and $m$ is not a multiple of $p$.
Then compute $q^l (mod. m), \, l = 2, 3, ...$ and find $e = m.o. q (mod. m)$. 
The polynomial $f(t)$ is  reducible  if $e \neq d$. 

\item Suppose that $m < q^d - 1$, $m$ is not a multiple of $p$ and $e = d$.
Then calculate $r = rk ([f]^l - E)$ for divisors $l_1 < l_2 < ...$ of $m$.
If for a divisor $l < m$ of $m$ the rank $r < d$, then $f(t)$ is a reducible polynomial. 
Otherwise, it is irreducible.  
\end{enumerate}

{\bf Remark 1.} 
Calculation of degrees of companion matrix $[f]$ can be realized in a vertical tape (strip) 
as follows.   
Put in the beginning of the strip the matrix $[f]$. 
At every step, assign a new bottom line, equal to the sum 
of the last line, multiplied by $-a_{d-1}$, 
of the penultimate line, multiplied by $-a_{d-2}$, ..., 
of the $d$-th since the end line, multiplied by $-a_0$. 
Then the square matrix, formed by the lines $l, l +1, ..., l +d - 1$
of this strip, is equal to $[f]^l$.

{\bf Remark 2.}
If we know one primitive unitary irreducible over a finite field ${\bf F}_q$ polynomial $f(t)$ of a degree $d$, 
then we can find all unitary irreducible over ${\bf F}_q$ polynomials $g(t)$ of a  degree $d'$, dividing $d$,  
 by formula
\begin{equation}
\label{formula}
g(t)^s = det (tE - [f]^{\frac{q^d-1}{m'}l}), \quad l \leq m', \,  (l, m') = 1,
\end{equation}
where  $d = d's$, $m'$ is a divisor of $q^{d'} - 1$ such that $m.o. q (mod. m') = d'$. 
Then the order $ord \, g(t) = m'$. 

Let us emphasize that by this formula, each unitary irreducible polynomial $g(t)$ of degree $d'$ dividing $d$ 
arises exactly $d'$ times for different values of $l$, satisfying the condition $l \leq m', \,  (l, m') = 1$. 
Thus, we have that the number of unitary irreducible over ${\bf F}_q$ polynomials of a degree $d$ 
and of an order $m$, which are related by the condition $d = m.o. q (mod. m)$,
 is equal to $\varphi(m) \slash d$, where $\varphi(x)$ is the Euler function.

Instead of Theorem $\ref{th}$ we prove the following slightly more general result.

\begin{theorem}
\label{th2}
Let $A$  be a non-singular matrix over a finite field ${\bf F}_q$ 
of size $d \times d$,
$f(t)$ be the characteristic polynomial of $A$, $m = m.o. A$.
Then the following assertions are true. 
\begin{itemize}\itemsep=4pt
\item[\rm 1)] If $f(t)$ is a reducible polynomial, then 
\begin{equation}
\label{inequ0}
m < q^d - 1.
\end{equation}
\item[\rm 2)] The polynomial $f(t)$ is irreducible if and only if 
\begin{itemize}\itemsep=4pt
\item[\rm (a)] $m$ is not divided by $p$,
\item[\rm (b)] $d = m.o. q (mod. m)$,
\item[\rm (c)] $rk (A^l - E) = d$ for all $l < m$, dividing $m$. 
\end{itemize}
\end{itemize}
\end{theorem}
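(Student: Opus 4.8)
The plan is to reduce everything to the generalized Jordan normal form of $A$ over ${\bf F}_q$ and to the resulting formula for the multiplicative order of a matrix. Write the distinct irreducible factors of $f$ as $g_1,\dots,g_r$ with $\deg g_i=\delta_i$, let $k_i$ be the largest size of a Jordan block attached to $g_i$, and let $\mathrm{ord}(g_i)$ denote the multiplicative order of a root $\alpha_i\in{\bf F}_{q^{\delta_i}}$ of $g_i$. The first step is to prove the order formula
\[
m \;=\; \mathrm{lcm}_{1\le i\le r}\bigl(\mathrm{ord}(g_i)\cdot p^{\lceil \log_p k_i\rceil}\bigr).
\]
This rests on two observations: each generalized Jordan block splits as a commuting product of a semisimple part, whose order is $\mathrm{ord}(g_i)$ and divides $q^{\delta_i}-1$, and a unipotent part $E+N$ with $N$ nilpotent; and in characteristic $p$ one has $(E+N)^{p^s}=E+N^{p^s}$, so the order of the unipotent part is the least $p^s\ge k_i$.

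For assertion 1) I would separate two cases according to whether $f$ is squarefree. If $f$ has a repeated irreducible factor then some $k_i\ge 2$, the order formula forces $p\mid m$, whereas $q^d-1$ is prime to $p$; hence $m\ne q^d-1$. If $f$ is squarefree but $r\ge 2$, then $A$ is semisimple and $m=\mathrm{lcm}_i\,\mathrm{ord}(g_i)\le\prod_i(q^{\delta_i}-1)$. In both situations the conclusion $m<q^d-1$ follows once one has the general bound $m\le q^d-1$ together with the analysis of when equality holds. That bound I would obtain from the per-factor inequality
\[
(q^{\delta}-1)\,p^{\lceil \log_p k\rceil}\ \le\ q^{\delta k}-1 ,
\]
valid for all $\delta,k\ge 1$ with equality exactly when $k=1$, combined with $\prod_i(q^{\delta_i k_i}-1)\le q^{\sum_i\delta_i k_i}-1\le q^d-1$. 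Tracking the equality cases shows that $m=q^d-1$ precisely when $f$ is irreducible and primitive, so reducibility yields the strict inequality.

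For assertion 2), the forward implication is the transparent one. If $f$ is irreducible of degree $d$, it is the minimal polynomial of $A$, so ${\bf F}_q[A]\cong{\bf F}_q[t]/(f)\cong{\bf F}_{q^d}$ and $A$ corresponds to a root $\alpha$. Then $m=\mathrm{ord}(\alpha)$ divides $q^d-1$, giving (a); the degree $d$ of $\alpha$ over ${\bf F}_q$ is the least $c$ with $\alpha^{q^c}=\alpha$, i.e.\ the least $c$ with $m\mid q^c-1$, giving (b); and for a proper divisor $l$ of $m$ one has $\alpha^l\ne 1$, so $A^l-E$ is a nonzero, hence invertible, element of the field ${\bf F}_q[A]$, giving (c).

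The substance is the converse. From (a), $p\nmid m$ forces every unipotent part in the order formula to be trivial, so $A$ is semisimple and $f=g_1\cdots g_r$ is squarefree with $m=\mathrm{lcm}_i\,\mathrm{ord}(g_i)$. Now $A^l-E$ has rank $d$ if and only if no root of $f$ is an $l$-th root of unity; applying (c) with $l=\mathrm{ord}(g_i)$ shows that no $\mathrm{ord}(g_i)$ can be a proper divisor of $m$, whence $\mathrm{ord}(g_i)=m$ for every $i$. Each root $\alpha_i$ then has order exactly $m$ and generates ${\bf F}_{q^{\delta_i}}$, so $\delta_i$ equals the least $c$ with $m\mid q^c-1$, which by (b) is $d$. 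Since $\sum_i\delta_i=d$ and each $\delta_i=d$, there is a single factor and $f$ is irreducible. The main obstacle throughout is the first step, namely establishing the order formula and, in particular, the sharp inequality $(q^{\delta}-1)p^{\lceil \log_p k\rceil}\le q^{\delta k}-1$ with its equality case; once these are in hand, the remainder is a clean translation between matrices, field elements, and the order of $q$ modulo $m$.
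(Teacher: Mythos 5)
Your proposal is correct and follows the same overall strategy as the paper (reduce to the generalized Jordan normal form, compute $m$ blockwise, split assertion 1 into the ``repeated factor'' and ``several distinct factors'' cases, and run the field-model argument ${\bf F}_q[A]\cong{\bf F}_q[t]/(f)$ for assertion 2), but your key lemma differs from the paper's in a substantive and in fact favorable way. The paper's Theorem \ref{th5} asserts that for a single generalized Jordan block built from $s$ copies of $[g]$, $\deg g = d'$, one has $m.o.\,J = p^r m'$ with $p^r$ the least $p$-power $\geq d = sd'$, on the grounds that the nilpotent part $N$ of $J^{m'}=E+N$ has nilpotency index $d$. That is not so: $N$ is block-upper-triangular with zero diagonal blocks, so its nilpotency index is the multiplicity $s$, and the correct exponent is the least $p$-power $\geq s$ (this is exactly Theorem 3.8 of Lidl--Niederreiter; e.g.\ for $q=2$, $g=t^2+t+1$, $s=2$ the order is $6$, not $12$). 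Your formula $m=\mathrm{lcm}_i\bigl(\mathrm{ord}(g_i)\,p^{\lceil\log_p k_i\rceil}\bigr)$ with $k_i$ the largest multiplicity is the right one. The paper's error is harmless for Theorem \ref{th2} itself --- the converse of assertion 2 only needs ``$p\nmid m$ forces all multiplicities equal to $1$,'' which holds under either version, and assertion 1 only needs an upper bound on $p^r$ --- but your version buys a cleaner proof of assertion 1: the single inequality $(q^{\delta}-1)p^{\lceil\log_p k\rceil}\le q^{\delta k}-1$ with equality only at $k=1$ replaces the paper's inequality (\ref{inequ1}) and its three exceptional cases (a), (b), (c) that must be checked separately. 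The remaining steps of your argument (order of $\alpha$ divides $q^d-1$; $\delta_i$ is the least $c$ with $m\mid q^c-1$; condition (c) ruling out $\mathrm{ord}(g_i)<m$; $\sum_i\delta_i=d$ with each $\delta_i=d$ forcing a single factor) match the paper's reasoning and are sound; what you should still write out is the verification that the nilpotency index of the unipotent part is exactly $k_i$ and that the coprimality of the semisimple and unipotent orders yields the product formula for each block.
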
 

Theorem $\ref{th}$ is obtained  from Theorem $\ref{th2}$ 
by choosing as matrix $A$ the companion matrix $[f]$ 
of a unitary polynomial $f(t)$ over ${\bf F}_q$. 
Note that the relation $d = m.o. q (mod. m)$ implies that $m$ divides $q^d - 1$ and that
(by definition) $f(t)$ is a {\it primitive} irreducible polynomial if  $m = q^d - 1$.

%%%%%%%%%%%%%%%%%%%%%%%%%%%%%%%%%%%%%%%%%%%%%%%%%%%%%%%%%%%
%%%%%%%%%%%%%%%%%%%%%%%%%%%%%%%%%%%%%%%%%%%%%%%%%%%%%%%%

\section{Preliminary results}

\subsection{Generalized Jordan normal form.} 
Our proof of Theorem $\ref{th2}$ is based 
on a theorem about generalized Jordan normal form 
of a matrix (linear operator) (\cite{F}, \cite{D1}, \cite{D2}).  

According to this theorem any square matrix $A$ 
over a field $\bf F$ is conjugated to a matrix 
$J = P^{-1}AP$, which is a direct sum of 
generalized Jordan blocks: 
\begin{equation}
\label {J}
J = J_1 \oplus ... \oplus J_k.
\end{equation}  
Such a matrix $J$ for any matrix $A$ is determined 
uniquely up to order of direct summands $J_i$
and is called the generalized Jordan normal form of $A$.

Any generalized Jordan block  $J_i$  is a square block-matrix 
of a form
$$
J_i =
\left(\begin{array}{cccccc}
[g_i] & W_i & 0 & ... & 0 & 0 \\
0 & [g_i] & W_i & ... & 0 & 0  \\
0 & 0 & [g_i] & ... & 0 & 0  \\
...& ... & ... & ... &... & ...  \\
0 & 0  & 0 & ... & [g_i] & W_i \\
0 & 0 & 0 & ...  & 0 & [g_i]  \\
\end{array} \right),
$$
where $[g_i]$ is the companion matrix  
of a unitary irreducible over $\bf F$ polynomial $g_i(t)$, 
which is a divisor of the characteristic polynomial $f(t)$ of $A$, 
 $W_i$ is a square matrix with $1$ in the lower left corner 
and $0$ elsewhere.

For example, if $A$ is the companion matrix $[f]$ 
of a unitary polynomial $f(t)$ and 
\begin{equation}
\label{factoriz}
f(t) = g_1(t)^{s_1} ... g_k(t)^{s_k}
\end{equation}
is the irreducible factorization of $f(t)$ with unitary irreducible polynomials $g_i(t)$, 
then $A$ has the generalized Jordan normal form $(\ref{J})$ 
with Jordan blocks $J_i$, having on the main diagonal $s_i$ companion matrices of $g_i(t)$.  

Since under conjugation the multiplicative order, the rank and 
the characteristic polynomial of a matrix are preserved, 
the proof of Theorem $\ref{th2}$ is reduced to the case, 
when $A$ is a Jordan matrix $(\ref{J})$.

%%%%%%%%%%%%%%%%%%%%%%%%%%%%%%%%%%%%%%%%%%%%%%%%%%%%%%%%%%%%%%%%%%%%%%%%%%%%%%

\subsection{Some results on block-matrices.} We use the following results on block-matrices. 

The $\bf F$-algebra of square matrices over the field $\bf F$ of size $d \times d$ 
is denoted by $M_d({\bf F})$.
Let $A \in M_d({\bf F})$ and $d = d_1 + ... + d_k$, where $d_1, ..., d_k$ 
are positive integers. 
Below we use the following denotations: 
$$
{\tilde d} = (d_1, ..., d_k), \quad  D_i = d_1 + ... + d_i,  i \in \{1, ..., k \}. 
$$

Let for arbitrary $i, j \in \{ 1, ..., k \}$ 
$A_{ij}$ be the matrix, formed by elements of matrix $A$, 
which are in intersections of lines $D_i - d_i +1, ..., D_i$ 
and rows $D_j - d_j +1, ..., D_j$. 
The matrix $A$ can be considered as a block-matrix with blocks  
$A_{ij}, \, i, j = 1, ..., k$ and $\tilde d$ is called the {\it type} of this block-matrix. 

Let  $A, B \in M_d({\bf F})$ be considered as $\tilde d$-block-matrix 
with blocks $A_{ij}$ and $B_{ij}$, respectively, and  $C = AB$.  
Then the following {\it property of  block multiplication} holds:
the matrix $C$ also can be considered as a $\tilde d$-block-matrix 
with blocks $C_{ij} = A_{i1}B_{1j} + ... + A_{ik}B_{kj}$. 

A $\tilde d$-block-matrix $A$, formed by blocks $A_{ij}$, is called  
{\it upper triangular block-matrix}, 
if all $A_{ij}$ are zero matrices under $i > j$.  

In particular, any generalized Jordan block $J_i$ of size $d_i \times d_i, \, d_i = s_id_i'$ 
can be considered as an upper triangular $\tilde d$-block-matrix with 
${\tilde d}_i = (d_i', ..., d_i')$.

Using the property of block multiplication, one can prove that

1)  the product of two upper triangular $\tilde d$-block-matrices $A$ and $B$ 
with blocks $A_{ij}$ and $B_{ij}$, respectively,
is an upper triangular  $\tilde d$-block-matrix  
and its diagonal blocks are $A_{ii} B_{ii}$; 

2) if $A = A_{11} \oplus ... \oplus A_{kk}$, then 
$A^m = A_{11}^m \oplus ... \oplus A_{kk}^m$ 
for any positive integer $m$; 

3) if  $A = A_{11} \oplus ... \oplus A_{kk}$ and 
each block $A_{ii}$ has a multiplicative order $m_i$, 
then $A$ has the multiplicative order $m = LCM(m_1, ..., m_k)$.

%%%%%%%%%%%%%%%%%%%%%%%%%%%%%%%%%%%%%%%%%%%%%%%%%%%%%%%%%%%%%%%%%%%%%%%%%%%%%%%%%%%%%%%%%

\subsection{Two theorems on the multiplicative order of  matrices over a finite field.}

\begin{theorem}
{\label{th5}}
Let $J$ be a generalized Jordan block of a size $d \times d$,
which has on the main diagonal $s$ companion matrices $[g]$ 
of a unitary irreducible over ${\bf F}_q, q = p^n$ polynomial 
$g(t)$ of  degree $d'$. 
Let $m' = ord \, g(t) = m.o.[g]$ and 
$r$ be the minimal integer such that $p^r \geq d$. 
Then 
\begin{equation}
\label{equ}
m.o.J = p^r m'.
\end{equation}
\end{theorem}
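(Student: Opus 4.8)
The plan is to compute $m.o.\,J$ by separating the ``diagonal'' contribution of the companion matrix $[g]$ from the ``nilpotent'' contribution of the super-diagonal $W$ blocks. First I would write $J = D + N$, where $D = [g] \oplus \cdots \oplus [g]$ is the block-diagonal part ($s$ copies of $[g]$) and $N$ is the strictly upper block-triangular part carrying the $W_i$ blocks. The key observation is that $D$ and $N$ commute: since $J$ is built from a single irreducible polynomial $g(t)$, the $W$-blocks intertwine with $[g]$ compatibly (each $W_i$ has a single $1$ in its lower-left corner and one checks $[g]W = W[g]$ holds in the relevant sense, or more carefully that $D$ is a scalar-like operator in the field extension picture). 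Because $D$ and $N$ commute and $N$ is nilpotent, I can use the binomial expansion $J^{\ell} = (D+N)^{\ell} = \sum_{j} \binom{\ell}{j} D^{\ell-j} N^{j}$ and try to detect exactly when $J^{\ell} = E$.

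The cleaner approach I would actually pursue reinterprets $J$ over the field extension $\mathbf{F}_{q^{d'}}$. Since $g(t)$ is irreducible of degree $d'$, the companion matrix $[g]$ acts as multiplication by a root $\alpha \in \mathbf{F}_{q^{d'}}$ of $g$, and $m' = ord\,g(t)$ is precisely the multiplicative order of $\alpha$. Over this extension, the whole block $J$ becomes conjugate to a single ordinary Jordan block of size $s$ with eigenvalue $\alpha$ on an $s$-dimensional space, i.e. $J \sim \alpha E_s + N_s$ where $N_s$ is the standard nilpotent Jordan block. Then I would compute $(\alpha E_s + N_s)^{\ell} = \alpha^{\ell}\left(E_s + \alpha^{-1}N_s\right)^{\ell} = \alpha^{\ell}\sum_{j=0}^{s-1}\binom{\ell}{j}\alpha^{-j}N_s^{\,j}$, using that $N_s^{\,s}=0$. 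This equals $E_s$ if and only if $\alpha^{\ell}=1$ and $\binom{\ell}{j}=0$ in $\mathbf{F}_q$ for all $1 \le j \le s-1$.

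The condition $\alpha^{\ell}=1$ says $m' \mid \ell$. The condition that all binomial coefficients $\binom{\ell}{1},\dots,\binom{\ell}{s-1}$ vanish modulo $p$ is a classical Lucas-type / Kummer statement: $\binom{\ell}{j} \equiv 0 \pmod p$ for all $1 \le j \le s-1$ holds exactly when $\ell$ is divisible by the smallest power of $p$ that is at least $s$, i.e. by $p^{r}$ where $r$ is minimal with $p^{r} \ge s$. I would make this precise via Lucas' theorem: $\binom{\ell}{j}\not\equiv 0$ for some $0<j<s$ unless the base-$p$ digits of $\ell$ in all positions below $p^{r}$ are zero. Combining, $J^{\ell}=E$ iff $m' \mid \ell$ and $p^{r}\mid \ell$, so $m.o.\,J = \mathrm{LCM}(m', p^{r}) = p^{r} m'$, where the last equality uses that $p \nmid m'$ (as $[g]$ is nonsingular, so $m'$ is coprime to $p$).

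The main obstacle I anticipate is a bookkeeping mismatch in the statement: the theorem takes $r$ minimal with $p^{r}\ge d$ rather than $p^{r}\ge s$, whereas the binomial analysis naturally produces the nilpotency index of the size-$s$ Jordan block, giving $p^{r}$ with $r$ minimal such that $p^{r}\ge s$. Since $d = s d'$, these agree only when $d' = 1$ in general, so I would need to verify carefully whether the relevant nilpotency length is $s$ or $d$; the resolution is that the nilpotent part $N$ of the $d\times d$ block $J$ has nilpotency index $s$ (the number of diagonal companion blocks), so the correct exponent should be governed by $s$. I would therefore either confirm the intended reading is $p^r \ge s$, or re-examine how $N$ acts to justify $d$; pinning down this index, and justifying the reduction to the ordinary Jordan form over $\mathbf{F}_{q^{d'}}$, is the step requiring the most care.
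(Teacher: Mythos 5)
Your splitting-field argument is sound, and---more importantly---the ``bookkeeping mismatch'' you flag at the end is not a misreading on your part: the theorem as printed, and the paper's own proof of it, are erroneous in exactly the way you suspect. The paper writes $J^{m'}=E+N$ and asserts that $N$ has nonzero entries directly above the main diagonal, hence nilpotency index $d$. This is false for $d'\ge 2$: $N=J^{m'}-E$ is strictly upper \emph{block}-triangular with $s$ block rows (the diagonal blocks of $J^{m'}$ are $[g]^{m'}=E$), so $N^{s}=0$, and the superdiagonal entries of $N$ inside a diagonal block all vanish. Concretely, take $q=p=2$, $g(t)=t^{2}+t+1$, $s=2$, so $d'=2$, $d=4$, $m'=3$. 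Direct computation gives $J^{3}=E+N$ with $N=\begin{pmatrix}0 & [g]^{2}\\ 0 & 0\end{pmatrix}\ne 0$ and $N^{2}=0$, hence $J^{6}=E$ and $m.o.\,J=6=2^{1}\cdot 3$, whereas the stated formula ($r$ minimal with $2^{r}\ge d=4$, i.e.\ $r=2$) predicts $12$. Your corrected exponent ($r$ minimal with $p^{r}\ge s$) gives the right answer. Your route differs from the paper's: the paper stays over ${\bf F}_q$ and uses $(E+N)^{p^{r}}=E+N^{p^{r}}$, which is more elementary and would also give the correct result if the nilpotency index of $N$ were identified as $s$ rather than $d$; but as written it only establishes the upper bound $J^{p^{r}m'}=E$ and never argues minimality, which your characterization of all $\ell$ with $J^{\ell}=E$ (via Lucas' theorem) does supply.

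Two corrections to your write-up. First, the opening decomposition $J=D+N$ with ``$D$ and $N$ commute'' should be deleted rather than hedged: one checks directly that $[g]W\ne W[g]$ already for $g(t)=t^{2}+t+1$, so that path fails as stated; only your field-extension version is valid. Second, over $\overline{{\bf F}_q}$ the generalized block does not become a single size-$s$ Jordan block (the dimensions do not match); it becomes a direct sum of $d'$ ordinary Jordan blocks of size $s$ with eigenvalues the conjugates $\alpha,\alpha^{q},\dots,\alpha^{q^{d'-1}}$. Since these all have multiplicative order $m'$, the LCM of the orders of the summands is still $p^{r}m'$ with $p^{r}\ge s$ minimal, so your conclusion stands once this is stated correctly. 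Note finally that the error in the exponent does not propagate fatally to the paper's main results: replacing $d$ by $s$ only decreases $m.o.\,J$, so the inequality $m<q^{d}-1$ in Theorem 2(1) survives, and in Theorem 2(2) condition (a) forces the nilpotent part to be trivial either way.
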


\begin{proof}
Considering $J$ as an upper triangular  block-matrix of the type ${\tilde d} = (d', ..., d')$,
we get that $J^{m'}$ is a (usual) upper triangular matrix of a form $E +N$, 
where $E$ is a unit matrix and 
$N$ is an upper triangular matrix with zeros on the main diagonal, 
having non-zero elements directly after elements of the main diagonal. 
Therefore $N$ is a nilpotent matrix with a nilpotent degree $d$. 

Since the matrices $E$ and $N$ commute, then for any positive integer $k$ we have
$$
(E + N)^k = C_k^0 E^k N^0 + C_k^1 E^{k-1} N^1 + ... + C_k^k E^0 N^k, 
$$
where $C_k^i$ are the binomial coefficients. 
Since matrices $E$ and $N$ are defined over the field ${\bf F}_q$ of characteristic $p$ 
and the binomial coefficients  $C_k^i$ are multiple of $p$ under $i \neq 0, k$, 
when $k = p^r$ for a positive integer $r$,
we get that $(E + N)^{p^r} = E + N^{p^r}$. 
If $p^r \geq d = sd'$,  we have $J^{p^r m'} = (J^{m'})^{p^r} = E$. 

Suppose that $r$ is the minimal integer such that $p^r \geq d$, i. e. 
\begin{equation}
\label{cond}
p^r \geq d, \quad p^{r-1} < d. 
\end{equation}
Then  we obtain the equality ($\ref{equ}$). 
Theorem $\ref{th5}$ is proved. 
\end{proof}

Using all previous results of this section, we get the following theorem.

\begin{theorem}
\label{th4}
Let $A \in M_d({\bf F}_q)$ and $m = m.o. A$. 
Suppose that $A$ is reduced to the generalized Jordan normal form $(\ref{J})$, 
where every generalized Jordan block $J_i$ has on the main diagonal $s_i$  companion matrices 
$[g_i]$ of a unitary irreducible over ${\bf F}_q$ polynomials $g_i(t)$ of a degree $d'_i$ 
and of a multiplicative order $m'_i$.
Suppose that $r_i$ is the minimal integer such that $p^{r_i} \geq d_i = s_i d'_i$, 
if $s_i \geq 2$, and $r_i = 0$, otherwise. 
Put $r = max \{ r_1, ..., r_k \}$. 
Then 
\begin{equation}
\label{mmatr}
m = p^r LCM (m'_1, ..., m'_k).
\end{equation}
\end{theorem}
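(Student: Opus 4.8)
The plan is to exploit the block-diagonal structure of the generalized Jordan normal form and to assemble $m$ block by block. Since $m = m.o. A$ exists, $A$ is non-singular; and because the multiplicative order is preserved under conjugation, I would first replace $A$ by its generalized Jordan normal form $J = J_1 \oplus \cdots \oplus J_k$, which leaves $m$ unchanged. Viewing $J$ as a block-diagonal matrix and invoking property 3) of the subsection on block-matrices, I obtain at once
$$
m = m.o. J = LCM(m.o. J_1, \ldots, m.o. J_k).
$$
It therefore remains to determine each $m.o. J_i$ and then to evaluate this least common multiple.

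Next I would compute the multiplicative order of a single block. When $s_i \geq 2$, the block $J_i$ has $s_i$ copies of $[g_i]$ on its diagonal and size $d_i = s_i d'_i$, so Theorem \ref{th5} applies directly and gives $m.o. J_i = p^{r_i} m'_i$ with $r_i$ the least integer satisfying $p^{r_i} \geq d_i$. When $s_i = 1$, the block degenerates to the single companion matrix $J_i = [g_i]$, whence $m.o. J_i = m.o.[g_i] = m'_i$; with the convention $r_i = 0$ this is again $p^{r_i} m'_i$. One must treat this case separately rather than quote Theorem \ref{th5} mechanically, since for $s_i = 1$ the nilpotent summand $N$ occurring in the proof of Theorem \ref{th5} vanishes, forcing the exponent to be $0$ and not the minimal integer with $p^r \geq d'_i$. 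In all cases,
$$
m.o. J_i = p^{r_i} m'_i, \qquad i = 1, \ldots, k.
$$

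The decisive arithmetic input is that every $m'_i$ is coprime to $p$. Indeed, $g_i(t)$ is irreducible and, by non-singularity of $A$, different from $t$, so its roots lie in ${\bf F}_{q^{d'_i}}^{*}$; hence $m'_i = ord \, g_i(t)$ divides $q^{d'_i} - 1$, which is prime to $p$ because $q = p^n$. Consequently $p^{r_i}$ is exactly the $p$-part of $p^{r_i} m'_i$ and $m'_i$ is exactly its prime-to-$p$ part. Least common multiples split along this decomposition: the $p$-part of $LCM(p^{r_1} m'_1, \ldots, p^{r_k} m'_k)$ equals $p^{\max\{r_1, \ldots, r_k\}} = p^{r}$, while its prime-to-$p$ part equals $LCM(m'_1, \ldots, m'_k)$. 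Combining,
$$
m = LCM(p^{r_1} m'_1, \ldots, p^{r_k} m'_k) = p^{r} LCM(m'_1, \ldots, m'_k),
$$
which is precisely $(\ref{mmatr})$.

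The per-block computations are handed to me by Theorem \ref{th5}, so the genuinely delicate points are the $s_i = 1$ exception, where a careless application of Theorem \ref{th5} would inflate the power of $p$, and the coprimality $(m'_i, p) = 1$, which is exactly what licenses replacing $LCM(p^{r_i} m'_i)$ by $p^{r} LCM(m'_i)$. I expect the coprimality-driven separation of the $p$-power from the prime-to-$p$ part to be the heart of the argument.
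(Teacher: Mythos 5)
Your proposal is correct and follows essentially the same route as the paper, which presents Theorem \ref{th4} as an immediate consequence of property 3) of block-diagonal matrices (giving $m = LCM(m.o.\,J_1, \ldots, m.o.\,J_k)$) together with Theorem \ref{th5} applied to each block. Your explicit treatment of the $s_i = 1$ case and of the coprimality $(m'_i, p) = 1$, which justifies extracting $p^r$ from the least common multiple, merely spells out details the paper leaves implicit.
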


Note that Theorem 3.11 of \cite{LN} follows immediately from our Theorem $\ref{th4}$, 
if take as matrix $A$  the companion matrix $[f]$ of the polynomial $f(t)$. 
By using the Theorem 3.11 of  \cite{LN}, the inequality $m \leq q^d - 1$ of Theorem $\ref{th}$
of this paper can be obtained from the inequality $p^r LCM (m'_1, ..., m'_k) \leq q^d - 1$.
In subsection 3.2 below this inequality is proved in a more general context of our Theorem $\ref{th2}$.

%%%%%%%%%%%%%%%%%%%%%%%%%%%%%%%%%%%%%%%%%%%%%%%%%%%%%%%%%%%%%%%%%%%%%%%%%%

\subsection{On binomials $t^{q^d} - t$.} For proving the basic results of this paper we also need the following theorem.

\begin{theorem}
\label{th3}
Any binomial 
$$
b_{q,d}(t) = t^{q^d} - t 
$$
is the product  of all unitary irreducible polynomials over the field ${\bf F}_q$ 
of a degree $d'$, dividing $d$, each with the multiplicity one. 
\end{theorem}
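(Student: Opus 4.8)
The plan is to prove this classical result (it is essentially Theorem 3.20 in Lidl--Niederreiter, the structure of $t^{q^d}-t$) by the standard route through field-theoretic reasoning about $\mathbf{F}_{q^d}$ and its subfields, phrased in the language of roots and minimal polynomials. First I would observe that $b_{q,d}(t) = t^{q^d}-t$ is a separable polynomial over $\mathbf{F}_q$: its formal derivative is $q^d t^{q^d-1}-1 = -1$ (since $\mathrm{char}\,\mathbf{F}_q = p$ divides $q^d$), which is a nonzero constant and hence coprime to $b_{q,d}$. Therefore $b_{q,d}$ has $q^d$ distinct roots in a splitting field, and these roots are precisely the elements of $\mathbf{F}_{q^d}$, since by definition $\mathbf{F}_{q^d}$ is the set of solutions of $x^{q^d}=x$. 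Consequently $b_{q,d}(t)=\prod_{\alpha\in\mathbf{F}_{q^d}}(t-\alpha)$, each root occurring with multiplicity one; this already secures the multiplicity-one assertion, so the remaining task is to identify which unitary irreducible polynomials appear as factors.

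The key structural step is the following two-way inclusion. For an element $\alpha\in\mathbf{F}_{q^d}$, let $g(t)$ be its minimal polynomial over $\mathbf{F}_q$ and let $d'=\deg g$; then $\mathbf{F}_q(\alpha)=\mathbf{F}_{q^{d'}}$, and since $\mathbf{F}_{q^{d'}}$ is a subfield of $\mathbf{F}_{q^d}$ precisely when $d'\mid d$, every irreducible factor of $b_{q,d}$ has degree dividing $d$. Conversely, I would argue that if $g(t)$ is any unitary irreducible polynomial over $\mathbf{F}_q$ of degree $d'$ with $d'\mid d$, then $g$ divides $b_{q,d}$: any root $\beta$ of $g$ generates $\mathbf{F}_{q^{d'}}$, which embeds in $\mathbf{F}_{q^d}$ because $d'\mid d$, so $\beta\in\mathbf{F}_{q^d}$, hence $\beta^{q^d}=\beta$, i.e. $\beta$ is a root of $b_{q,d}$; since $g$ is the minimal polynomial of $\beta$ and $b_{q,d}(\beta)=0$, we conclude $g\mid b_{q,d}$. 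The only arithmetic fact I need here is the standard subfield criterion $\mathbf{F}_{q^{d'}}\subseteq\mathbf{F}_{q^d}\iff d'\mid d$, which follows from the tower/degree formula for finite fields.

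Putting these together: $b_{q,d}$ is a product of distinct monic linear factors over $\mathbf{F}_{q^d}$, and grouping those roots according to their minimal polynomials over $\mathbf{F}_q$ partitions the linear factors into blocks, each block being exactly one unitary irreducible polynomial $g$ with $\deg g\mid d$. Every such $g$ appears (by the converse inclusion) and appears with multiplicity one (by separability, since no root is repeated). Therefore
\begin{equation}
b_{q,d}(t)=\prod_{d'\mid d}\ \prod_{\substack{g\ \text{monic irreducible}\\ \deg g=d'}} g(t),
\end{equation}
which is the claim.

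I expect the main obstacle to be one of bookkeeping rather than of depth: one must be careful that each irreducible $g$ with $\deg g\mid d$ genuinely divides $b_{q,d}$ \emph{and} that no extraneous factors or higher multiplicities creep in. The separability computation rules out multiplicities cleanly, and the subfield criterion handles exactly which degrees occur, so the delicate point is simply to verify that the degree count is consistent—namely that $\sum_{d'\mid d} d'\,N_q(d')=q^d$, where $N_q(d')$ is the number of monic irreducibles of degree $d'$. This consistency is automatic once the two inclusions are established, but it is worth stating as a sanity check; it is also the identity from which Gauss's formula for $N_q(d)$ is usually extracted by Möbius inversion. No genuinely hard estimate is involved; the entire argument rests on separability of $b_{q,d}$ and the basic Galois-theoretic description of subfields of $\mathbf{F}_{q^d}$.
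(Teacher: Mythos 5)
Your proposal is correct and follows essentially the same route as the paper: both use the derivative computation $b_{q,d}'(t)=-1$ to get $q^d$ distinct roots, identify the root set with the field $\mathbf{F}_{q^d}$, and reduce divisibility of $b_{q,d}$ by an irreducible $g$ of degree $d'$ to the subfield criterion $d'\mid d$. Your write-up is somewhat more explicit than the paper's (which compresses the two-way inclusion into a chain of four equivalent assertions), but the mathematical content is the same.
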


\begin{proof}
In its field of decomposition ${\bf F}\left\langle b_{q,d}(t)\right\rangle$, 
the binomial $b_{q,d}(t)$ has exactly $q^d$ different roots, because 
$b_{q,d}'(t) = -1$. 
The set of roots of the binomial $b_{q,d}(t)$ is closed under operations 
of subtraction and division on non-zero elements, 
consequently, this set coincide with the field 
${\bf F}\left\langle b_{q,d}(t)\right\rangle$. 

Let $g(t)$ be an irreducible over ${\bf F}_q$ polynomial of a degree $d'$. 
Then the field ${\bf F}_q[t[ \slash (g(t))$ consists of $q^{d'}$ elements, 
consequently, it is the field of decomposition for the binomial $b_{q,d'}(t)$. 
The following four assertions are equivalent:
\begin{enumerate}
\item[(i)] the polynomial $g(t)$ divides the binomial $b_{q,d}(t)$;
\item[(ii)] the field ${\bf F}\left\langle b_{q,d'}(t)\right\rangle$ is isomorphic 
to a subfield of the field ${\bf F}\left\langle b_{q,d}(t)\right\rangle$;
\item[(iii)] $q^{d'} - 1$ divides $q^d - 1$;
\item[(iv)] $d'$ divides $d$. 
\end{enumerate}  
Therefore the assertions of Theorem $\ref{th3}$ are true.
\end{proof}

%%%%%%%%%%%%%%%%%%%%%%%%%%%%%%%%%%%%%%%%%%%%%%%%%%%%%%%%%%%%%%%%%%%%%%%%%%%%%%%%%
%%%%%%%%%%%%%%%%%%%%%%%%%%%%%%%%%%%%%%%%%%%%%%%%%%%%%%

\section{Proofs of assertions of Section 1}

\subsection{Proof of  assertion 2 of Theorem $\ref{th2}$.}

 Firstly, starting the proof of Theorem  $\ref{th2}$,
 we suppose that the characteristic polynomial $f(t)$ of a matrix $A \in M_d({\bf F}_q)$ 
with $q = p^n$ is irreducible, and prove that then
$m = m.o. A$ is not a multiple of $p$ and $d = m.o. q (mod. m)$.

If  $f(t) \in {\bf F}_q[t]$ is an arbitrary irreducible polynomial of degree $d$ 
and $(f(t))$ is the prime ideal, generated by $f(t)$, 
then ${\bf F}_q[t] \slash (f(t))$ is a finite field with $q^d$ elements 
and its multiplicative group is a cyclic group of order $q^d - 1$.

Recall that according to Cayley-Hamilton theorem, the matrix $A$ is a root of $f(t)$. 
Since by supposition $f(t)$ is an irreducible polynomial, 
the extension ${\bf F}_q(A)$  of the field ${\bf F}_q$ by $A$
can be considered as a matrix model of the field ${\bf F}_q[t] \slash (f(t))$ 
in the ring of matrices $M_d({\bf F}_q)$. 
Therefore, according to Lagrange theorem, $m = m.o. A$ is a divisor of $q^d - 1$. 
Hence

(i) $m$ is not divided by $p$ and  

(ii) $d$ is an exponent for $q (mod. m)$. 

Since $p$ does not divide $m$, there exists $e = m.o. q (mod m)$, and 
since $d$ is an exponent for $q (mod. m)$, then $e$ divides $d$.

On the other hand, the relation $e = m.o. q (mod. m)$ implies 
that $m$ divides $q^e - 1$. 
If $m = m.o. A$, we get that  $A^{q^e-1} = E$. 
Thus, $A$ is a root of the binomial $b_{q,e}(t)$. 
Since,  according to Cayley-Hamilton theorem,
the irreducible (characteristic) polynomial $f(t)$ 
also  has $A$ as  a root, 
then  $f(t)$ is decomposed in the field 
${\bf F}_q(A) = {\bf F}\left\langle b_{q,d}(t)\right\rangle$ 
on the linear factors and, consequently,
$f(t)$ divides $b_{q,e}(t)$.
 Then, according to Theorem $\ref{th3}$,  $d$ divides $e$. 

Thus, we have proved that $m$ is not divided by $p$ and  $d = e = n.o. q (mod. m)$. 
To finish the proof of the direct implication of the second assertion of Theorem $\ref{th2}$, 
left to prove that $rk (A^l - E) = d$, if  $l < m$ and $l$ divides $m$.

We have that $A^l$ is a root of the binomial $b_{q,d}(t)$.  
Then according to  Theorem $\ref{th3}$ it is a root of 
an irreducible polynomial $g(t)$, dividing $b_{q,d}(t)$, 
with a degree $d' = deg g(t)$, dividing $d$. 

Suppose that $d = d's$.
Then the characteristic polynomial of $A^l$ is equal 
to $g(t)^s$ and  the generalized Jordan normal form of  $A^l$ is 
a direct sum of $s$ copies of companion matrices $[g]$.
Therefore $rk(A^l - E) = d$, if $l < m$.

Now we prove the inverse implication of the second assertion 
of Theorem $\ref{th2}$, i.e. 
the sufficiency the conditions  (a), (b), (c) of Theorem $\ref{th2}$
for irreducibility of the characteristic polynomial  $f(t)$ 
of matrix $A$. 
According to the subsection 2.1, 
it is sufficient to check it for Jordan matrices $\ref{J}$. 

From the condition (a) and Theorems $\ref{th5}$ and $\ref{th4}$ we have that 
each Jordan block $J_i$ is the companion matrix of an irreducible polynomial $g_i(t)$. 

From the condition (c) we get that all $g_i(t)$ have the sane 
multiplicative order $m'$ and, consequently, a same degree 
\begin{equation}
\label{rel1}
d' = m.o. q (mod. m').
\end{equation}
Hence according to $\ref{J}$
\begin{equation}
\label{rel2}
d = d'k
\end{equation}
and according to Theorem $\ref{th4}$ 
\begin{equation}
\label{rel3}
m = m.o. J = m'.
\end{equation}
Then, using the condition (b) and the relations 
$(\ref{rel1}). (\ref{rel2}), (\ref{rel3})$, 
we get that $k = 1$.
The second assertion of Theorem $\ref{th2}$ is proved.

%%%%%%%%%%%%%%%%%%%%%%%%%%%%%%%%%%%%%%%%%%%%%%%%%%%%%%%%%%%%%%%%%%

\subsection{Proof of assertion 1 of Theorem $\ref{th2}$.} 

In this subsection we check the inequality ($\ref{inequ0}$).
Actually, it is sufficient to check this inequality 
for generalized Jordan matrices $J = J_1 \oplus ... \oplus J_k$.

If the unitary polynomial $f(t)$ over ${\bf F}_q$ is reducible over this field, then 
either $f(t) = g(t)^s$, where $g(t)$ is a unitary  irreducible polynomial over ${\bf F}_q$ 
and the integer $s \geq 2$, 
or $f(t) = f_1(t) f_2(t)$, where $f_1(t)$ and $f_2(t) $are different unitary 
(therefore non-constant) polynomials.
Correspondingly, there are two following alternative possibilities 
for the Jordan normal form $J$ of $A$: 

(i) $J$ consists of a unique generalized Jordan block, which has 
on the main diagonal $s \geq 2$ companion matrices $[g]$ 
of an irreducible polynomial $g(t)$; 

(ii) $J$ is a direct sum of $k \geq 2$ generalized Jordan blocks. 

We check the inequality $(\ref{inequ0})$ for these two cases separately.

Consider the case (i). Then $J$ is a matrix of  the size $d \times d$, where $d = sd', d' = deg \, g(t)$.

By using Theorem $\ref{th5}$ and  the inequalities\\ 
1) $p^r < d$ of the condition $(\ref{cond})$, \\
2) $m' \leq q^{d'} - 1$, which is true, 
because we have proved in item 2 that 
the multiplicative order $m'$ of the matrix $[g]$ 
with the irreducible over ${\bf F}_q$ characteristic polynomial $g(t)$ 
of order $d'$ divides $q^{d'} - 1$,
( or because $d' = m.o. q (mod. m')$),\\
3) $p \leq q = p^n$,\\
 we obtain that the inequality $(\ref{inequ0})$ follows from 
(is equivalent to) the inequality
\begin{equation}
\label{inequ1}
pd's < (p^{d'})^{s-1} + (p^{d'})^{s-2} + ... + p^{d'} + 1,
\end{equation}

Under $s \geq 3$ this inequality is true, because $px \leq p^x$, if $x = 1, 2$, and  
$p < px \leq p^x$, if $x > 3$.

Under $s = 2$ the inequality ($\ref{inequ1}$), evidently,  also is true, excepting  the cases   

(a) $d' = 1$ and $p$ is an arbitrary prime;

(b) $d' = 2$ and $p = 2$ or $3$; 

(c) $d' = 3$ and $p = 2$. \\

Thus, the inequality $(\ref{inequ0})$ is proved, excepting these three cases.
Below we check that the inequality $(\ref{inequ0})$ holds in these cases as well. 

(a) We have $d' = 1, s = 2, d = sd' = 2$ and from  $(\ref{cond})$ we get that  $r = 1$. 
Therefore the inequality $(\ref{inequ0})$ is reduced to the inequality $p < p + 1$.

(b) $d' = s = 2, d = 4$ and $p$ is $2$ or $3$. 
Then the condition $(\ref{cond})$ implies $r = 2$. 
Consequently, the inequality $(\ref{inequ0})$ take a form 
$p^2 < p^2 + 1$.

(c) $d' = 3, s = 2, d = 6, p = 2$. The condition $(\ref{cond})$ gives $r = 3$ 
and the inequality $(\ref{inequ0})$ take a form $p^3 < p^3 + 1$.

Thus, the inequality $(\ref{inequ0})$ for matrices $J$ 
with a  reducible  characteristic polynomials $f(t)$ 
is proved in the case (i).

Consider the case (ii). 
Let $J$ be a direct sum of $k$ Jordan blocks $J_i$
of sizes $d_i \times d_i$ and of multiplicative orders $m_i$, $i = 1, ..., k$. 
For any Jordan block we have proved above that 
$m.o. J_i = m_i \leq q^{d_i} - 1$. 
Using the block form for $J$ of type ${\tilde d} = (d_1, ..., d_k)$, 
we get that 
$$
J^{m_1 ... m_k} = J_1^{m_1 ... m_k} \oplus ... \oplus J_k^{m_1 ... m_k} = E.
$$ 
Consequently, 
$$
m.o.  J \leq m_1 ... m_k \leq (q^{d_1} - 1)...(q^{d_k} - 1). 
$$
If $s \geq 2$,  we have 
$$
(q^{d_1} - 1)...(q^{d_k} - 1) < q^d - 1, \,\, d = d_1 + ... + d_k
$$
Thus, the first assertion  of theorem $\ref{th2}$ and, consequently, 
all this theorem are completely proved.

%44444444444444444444444444444444444444444444444444444444444444$%

\subsection{Proofs of assertions of Remarks 1 and 2.}  

The assertion of Remark 1 follows immediately by the definition 
of multiplication of matrices. 
So, it remains to prove the assertions of Remark 2.

Let $f(t)$ be a unitary primitive irreducible over the finite field ${\bf F}_q$ 
polynomial of a degree $d$.
Then $[f]^l, \, l = 1, ..., q^d - 1$ together with the corresponding zero matrix are 
all roots of the binomial $b_{q,d}(t)$. 
On the other hand, the binomial $b_{q,d}(t) $ is equal to the product 
of all irreducible  over the field ${\bf F}_q$ unitary irreducible polynomials $g(t)$
of a degree $d'$, dividing $d$, with a multiplicity one. 
Therefore  each  such $g(t)$ has $d'$ roots 
$$
[f]^{\frac{q^d-1}{m'}l}, \quad l< m', \, (l, m') = 1
$$
and, naturally, is uniquely  determined by this set of roots. 
Here 
$$
m' = ord \, g(t) =  m.o. [g], \quad
d' = m.o. q (mob. m').
$$ 
Then each such matrix has  a characteristic polynomial 
$g(t)^s$ and its generalized Jordan normal form  is equal to 
the direct sum of $s$ copies of companion matrices $[g]$. 
This explicates Remark 2.

%%%%%%%%%%%%%%%%%%%%%%%%%%%%%%%%%%%%%%%%%%%%%%%%%%%%%%%%%%%%%%%%%%%%%%%%%%%%%%%%%%%%%%
%%%%%%%%%%%%%%%%%%%%%%%%%%%%%%%%%%%%%%%%%%%%%%%%%%%%%%%%%%%%%%%%%%%%%%%%%%%%%%%

\end{document}